\newcommand{\can}{\mathbf{can}}
\newcommand{\one}{\mathbf{1}}
\renewcommand{\Vec}{\text{Vec}}
\newcommand{\cC}{{\mathcal C}}
\newcommand{\ot}{{\otimes}}
\newcommand{\TY}{{\mathcal{TY}}}
\newcommand{\ku}{{\Bbbk}}
\newcommand{\Z}{{\mathbb Z}}
\newcommand{\C}{{\mathcal{C} }}
\newcommand{\id}{\operatorname{id}}
\newcommand{\cB}{\mathcal{B}}
\newcommand{\cD}{\mathcal{D}}
\newcommand{\Obj}{\mbox{\rm Obj\,}}
\newcommand{\Hom}{\operatorname{Hom}}
\newcommand{\End}{\operatorname{End}}
\newcommand{\Id}{\operatorname{Id}}
\newcommand{\Aut}[1]{\operatorname{Aut}_\otimes(#1)}
\theoremstyle{plain}
\numberwithin{equation}{section}
\newtheorem{theorem}{Theorem}[section]
\newtheorem{lemma}[theorem]{Lemma}
\newtheorem{corollary}[theorem]{Corollary}
\newtheorem{proposition}[theorem]{Proposition}
\theoremstyle{definition}
\newtheorem{definition}[theorem]{Definition}
\newtheorem{example}[theorem]{Example}
\theoremstyle{remark}
\newtheorem{remark}[theorem]{Remark}
\theoremstyle{remark}
\newcounter{commentcounter}
\newcounter{todocounter}
\author[C\'esar Galindo]{C\'esar Galindo}
\address{ Departamento de Matem\'aticas, Universidad de los Andes, Bogot\'a, Colombia}
\email{cn.galindo1116@uniandes.edu.co}
\begin{document}

\title[Trivializing group actions]{Trivializing group actions on braided crossed tensor categories and graded braided tensor categories}

\thanks{The author would like to thank the hospitality and excellent working conditions of the Department of Mathematics at the University of Hamburg, where he carried out this research as a Fellow of the Humboldt Foundation.}
\begin{abstract}
For an abelian group $ A $, we study a close connection between braided crossed $ A $-categories with a trivialization of the $ A $-action and $ A $-graded braided tensor categories. Additionally,  we prove that the obstruction to the existence of a trivialization of a categorical group action $T$ on a monoidal category $\cC$ is given by an element  $O(T)\in H^2(G,\Aut{\Id_{\cC}})$. In the case that $O(T)=0$, the set of obstructions form a torsor over $\operatorname{Hom}(G,\Aut{\Id_{\cC}})$, where $\Aut{\Id_{\cC}}$ is the abelian group of tensor natural automorphisms of the identity.   

The cohomological interpretation of trivializations, together with the homotopical classification of (faithfully graded) braided $A$-crossed tensor categories developed in \cite{ENO3}, allows us to provide a method for the construction of faithfully $A$-graded braided tensor categories. We work out two examples. First, we compute the obstruction to the existence of trivializations for the braided crossed category associated with a pointed semisimple tensor category. In the second example, we compute explicit formulas for the braided $\Z/2$-crossed structures over Tambara-Yamagami fusion categories and, consequently, a conceptual interpretation of the results in \cite{siehler2000braided} about the classification of braidings over Tambara-Yamagami categories.
\end{abstract}

\subjclass[2000]{16W30, 18D10, 19D23}

\date{\today}
\maketitle

 \section*{Introduction}

The notion of braided $G$-crossed tensor category introduced by Turaev in \cite{T}  has played an essential role in the recent application of fusion categories to enriched symmetries in condensed matter physics and the construction of Homotopical  TFTs, \cite{MR2959440, MR3195545,MR4021927,MR2183964,MR1923177,MR4033513,MR3555361}. 
 Recently in \cite{jones20203categorical}, higher categorical interpretations of braided $G$-crossed tensor categories have been developed, allowing a better understanding of the reason for its presence in different theories. 
 
In \cite{ENO3}, the authors studied braided $G$-crossed fusion categories using invertible module categories over braided fusion categories. They reduce the classification problem of braided $G$-crossed fusion categories with trivial component a braided fusion category $\cB$, to the classification (up to homotopy) of maps from $BG$ to  $B \operatorname{Pic}(\cB)$  (the classifying spaces of the monoidal 2-category of invertible $\cB$-module categories). This approach allows them to use the obstruction theory for homotopy classes of maps into the associated  Postnikov towers and provide an elegant and useful group cohomological parametrization of (faithfully graded) braided $G$-crossed fusion categories.

The aim of this note is to discuss, for any abelian group $ A $, a close connection between braided crossed $ A $-categories with a trivialization of the $ A $-action and $ A $-graded braided tensor categories. The existence of trivializations of categorical actions of groups and its classification has a straightforward cohomological interpretation (see Theorem \ref{obstruccion general}). The obstruction of the existence of a trivialization of a $G$-action $T$ on a tensor category $\cC$ is given by an element in $O(T)\in H^2(G,\Aut{\Id_{\cC}})$ and in case $O(T)=0$, the set of obstructions form a torsor over $\operatorname{Hom}(G,\Aut{\Id_{\cC}})$, where here $\Aut{\Id_{\cC}}$ means the abelian group tensor natural automorphisms of the identity.   The cohomological interpretation of trivialization, together with the homotopical classification of (faithfully graded) braided $A$-crossed tensor categories developed in \cite{ENO3}, allows us to provide a method for the construction of faithfully $A$-graded braided tensor categories. We consider two examples in the paper. The first one is the computation of the obstruction to the existence of trivializations for the braided crossed category associated with a pointed semisimple tensor category. As a second example, we compute explicit formulas for the braided $\Z/2$-crossed structures over Tambara-Yamagami fusion categories and, consequently, a conceptual interpretation of the results \cite{siehler2000braided} about the classification of braiding over Tambara-Yamagami categories.

Recently, Davydov and Nikshych in \cite{davydov2020braided}  proved that braided finite tensor categories (faithfully) graded by a finite group $A$ are in correspondence to braided monoidal 2-functors from $A$ to certain braided monoidal 2-category. In the spirit of \cite{ENO3}, in \emph{loc cit}, the obstruction and parametrization of these braided monoid 2-functors were developed using the Eilenberg-Mac ~ Lane cohomology. We hope our approach for constructing group-graded braided tensor categories can be considered as a complement to the methods developed in \cite{davydov2020braided}.

The paper's organization is as follows: In Section 1, we recall some basic definitions of groups' actions on monoidal categories. In Section 2, we discuss the obstruction and parametrization of the trivialization of group actions on tensor categories. In Section 3, we introduce the 2-category of braided $A$-crossed tensor categories with a trivialization and proved its equivalence with the 2-category of $A$-graded braided tensor categories. We worked out the example of semisimple pointed tensor categories. In section 4, we explicitly described formulas for the braided $\Z/2$-crossed structures on Tambara-Yamagami categories, and the case of Ising categories is presented in detail.
\section{Preliminaries}\label{prelim}

\subsection{Notation}

Let $\cC$ be a category. We denote by $\Obj(\cC)$ the class of objects of $\cC$ and by $\Hom_\cC(X, Y )$ the set of morphisms in $\cC$ from an object $X$ to an object $Y$. Also, by abuse of notation, $X \in \cC $ means that $X$ is an object of $\cC$.

The symbols  $\cC$ and $\cD$ will denote monoidal categories with unit objects $\one_\cC$ and $\one_\cD$ respectively. If no confusion arises, we will indicate the unit object of a monoidal category just by $\one$. To simplify computations and statements, by monoidal category, we will mean a strict monoidal category, and this is justified by  the  coherence theorem of S. MacLane. 

\subsection{Group actions on monoidal categories}\label{sectio defini G-category}
Let $G$ be a  group. We will denote by $\underline{G}$ the discrete monoidal category with $\Obj(\underline{G})=G$ and monoidal structure defined by the multiplication of $G$. If $\cC$ is a monoidal category, we will denote by $\underline{\Aut{\cC}}$ the monoidal category of monoidal autoequivalences of $\cC$ and natural monoidal isomorphism with tensor product given by the composition of monoidal functors.

An action of $G$ on $\cC$  is a monoidal functor $T:\underline{G}\to \underline{\Aut{\cC}}$. A $G$-action on $\cC$ defines the following data:
\begin{itemize}
\item monoidal functors $T(g):\C\to \C$ for each $g\in G$,
\item monoidal natural isomorphisms $T_2(g,h): T(gh)\to T(g)\circ T(h)$ for each pair $g,h\in G$,
\end{itemize}
such that 

\[
\xymatrix{
&T(g)\circ T(h)\circ T(k) \ar[dl]_{b(g,h)\circ \Id_{T(k)}} \ar[dr]^{\Id_{T(g)}\circ b(h,k)}& \\ 
T(gh)\circ T(k) \ar[dr]_{b(gh,k)} &&T(g)\circ T(hk) \ar[dl]^{b(g,hk)}\\
&T'(ghk)&
}
\]for all $g, h, k\in G$.
A $G$-action  is called \emph{strict} if $T(g)$ are strict monoidal and $T_2(g,h)$ are identities for all $g,h\in G$.




By  \cite[Theorem 1.1]{GALINDO-Coherence} every monoidal category with a $G$-action is canonically $G$-equivariant equivalent to a monoidal category with a strict $G$-action. Using \cite[Theorem 1.1]{GALINDO-Coherence}, we could assume without loss of generality that every $G$-action is strict.

\section{Trivializations of $G$-actions}

In this section, we define the notion of a trivialization of action of a group on a monoidal category and define an obstruction to the existence of trivializations.
\begin{definition}
Let $T:\underline{G}\to \Aut{\cC}$ be an action of group $G$ on a monoidal category $\cC$. 
A trivialization $\eta$ of $T$ consist of a family of \emph{monoidal} natural isomorphisms $\eta_g: T(g)\to \operatorname{Id}_{\cC}$, for all $g \in G$, such that 

\begin{align}
\eta_g\circ \eta_h=\eta_{gh}\circ T_2(g,h), && \forall g,h \in G.    
\end{align}

\end{definition}

\begin{remark}\label{rmk:trivial action}
A trivialization of an action $T:\underline{G}\to \Aut{\cC}$ is a just a monoidal functor from $T$ to the trivial action.
\end{remark}

\begin{lemma}\label{lemma: transport structure}
Let $F:\cC\to \cD$ be a monoidal functor and $H:\cC\to \cD$ a functor and natural isomorphism $\gamma:F\to H$. Hence there is a unique monoidal structure on $H$ such that that $\gamma$ is a monoidal isomorphism. Moreover, the monoidal structure on $H_2(X,Y):H(X\ot Y)\to H(X)\ot H(Y)$ is given by the commutativy of the diagram
\begin{equation}
    \xymatrix{
    H(X)\ot H(Y) \ar[rr]^{H_2(X,Y)}&& H(X\ot Y)\\
    F(X)\ot F(Y) \ar[u]^{\gamma_X\ot \gamma_Y} \ar[rr]^{F_2(X,Y)}&& F(X\ot Y) \ar[u]_{\gamma_{X\ot Y}}
    }
\end{equation}
\end{lemma}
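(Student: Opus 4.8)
The plan is to use transport of structure: because $\gamma$ is a natural \emph{isomorphism}, the requirement that the displayed square commute for every pair $X,Y$ simultaneously \emph{defines} the candidate structure maps and pins them down uniquely. Reading off the diagram and using that $\gamma_X,\gamma_Y$ are invertible, the square commutes if and only if
\[
H_2(X,Y)=\gamma_{X\ot Y}\circ F_2(X,Y)\circ(\gamma_X^{-1}\ot\gamma_Y^{-1}),
\]
and I would likewise set the unit constraint to be $H_0=\gamma_{\one}\circ F_0$ (which is again an isomorphism when $F$ is strong). This already settles \emph{uniqueness}: any monoidal structure on $H$ for which $\gamma$ is monoidal must make the square commute, and invertibility of the components of $\gamma$ leaves no freedom in $H_2$ or $H_0$. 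It also establishes the final clause of the statement, since the formula above is exactly the commutativity asserted by the diagram.

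What remains is to check that $(H,H_2,H_0)$ thus defined is genuinely a monoidal functor. First I would verify that $H_2$ is natural in each variable, i.e. that $H(f\ot g)\circ H_2(X,Y)=H_2(X',Y')\circ(H(f)\ot H(g))$ for all $f\colon X\to X'$, $g\colon Y\to Y'$. This is a short diagram chase: one substitutes the defining formula, inserts the naturality squares of $\gamma$ (applied to $f\ot g$ on the target side and to $f,g$ on the source side) together with the naturality of $F_2$, and then the $\gamma^{\pm1}$'s cancel.

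The substantive step is the associativity coherence axiom. Since $\cC$ and $\cD$ are strict, I must show
\[
H_2(X\ot Y,Z)\circ\bigl(H_2(X,Y)\ot\id_{H(Z)}\bigr)=H_2(X,Y\ot Z)\circ\bigl(\id_{H(X)}\ot H_2(Y,Z)\bigr).
\]
The strategy is to expand every $H_2$ by its defining formula and then use the interchange law $(\alpha\ot\beta)\circ(\alpha'\ot\beta')=(\alpha\circ\alpha')\ot(\beta\circ\beta')$, together with naturality of $\gamma$, to move all the $\gamma$'s to the outside; this lines the expression up with the corresponding axiom for $F$, namely $F_2(X\ot Y,Z)\circ(F_2(X,Y)\ot\id_{F(Z)})=F_2(X,Y\ot Z)\circ(\id_{F(X)}\ot F_2(Y,Z))$, after which the outer isomorphisms $\gamma_{X\ot Y\ot Z}$ and $\gamma_X^{-1}\ot\gamma_Y^{-1}\ot\gamma_Z^{-1}$ cancel from both sides. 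The two unit triangles are handled identically, conjugating the unit axioms of $F$ by the relevant components of $\gamma$ and $H_0$. The only point demanding care — and where I expect to spend the most bookkeeping effort — is tracking the ``spectator'' factors $\gamma_Z,\gamma_Z^{-1}$ (resp. $\gamma_X,\gamma_X^{-1}$) that appear in the middle of each side and must be recognized as composing to identities after the interchange law is applied. No genuine obstruction arises, precisely because each coherence constraint for $H$ is, by construction, the conjugate under $\gamma$ of the corresponding constraint for the monoidal functor $F$; monoidality of $\gamma$ then holds by the very definition of $H_2$ and $H_0$.
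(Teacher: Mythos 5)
Your proposal is correct and is precisely the transport-of-structure argument that the paper itself invokes (the paper's proof is just the one-line remark that this is ``a straightforward exercise on the transport of structures in category theory''). You have simply written out the details — the conjugation formula for $H_2$, uniqueness from invertibility of $\gamma$, and the verification of naturality and the coherence axioms by conjugating those of $F$ — all of which check out.
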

\begin{proof}
It is a straightforward exercise on the transport of structures in category theory.
\end{proof}
\begin{theorem}\label{obstruccion general}
Let $T$ be an action of a group $G$ on a tensor category $\cC$ such that $\sigma_*\sim_\otimes \operatorname{Id}_{\cC}$ for all $\sigma\in G$.  Let $\chi(\sigma):\sigma_*\to \operatorname{Id}_\cC$ be monoidal natural isomorphisms for each $g\in G$. 

Define for every pair $g,h\in G$, a monoidal natural automorphism of the identity $b(g,h)$  by the commutativity of the diagram 
\begin{equation}\label{def: b}
\xymatrix{
\Id_{\cC} \ar[rr]^{b(g,h) } && \Id_{\cC}  \\
T(g)\circ T(h)   \ar[u]^{\chi_g\circ \chi_h}\ar[rr]^{\phi(g,h)}&& T(gh) \ar[u]^{\chi_{gh}}}
\end{equation}
where $\circ$ is the composition in $\Aut{\cC}$.

Hence,
\begin{enumerate}

\item the map $b:G\times G\to \Aut{\Id_\cC}$ defines a 2-cocycle in $Z^2(G,\operatorname{Aut}_\otimes(\operatorname{Id}_\cC))$ and its cohomology class does not depend on the choice of  the natural isomorphisms $\chi(\sigma)$,  $\sigma \in G$,
\item the $G$-action $T$ is trivializable if and only $$0=[b]\in H^2(G,\operatorname{Aut}_\otimes(\operatorname{Id}_\cC)),$$ 
\item  in case that  $0=[b]$ the set of  all trivializations is a non-empty torsor over $\operatorname{Hom}(G,\operatorname{Aut}_\otimes(\operatorname{Id}_\cC))$.
\end{enumerate}
\end{theorem}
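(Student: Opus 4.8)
The plan is to reduce every statement to an identity in the single abelian group $A:=\Aut{\Id_\cC}$ of monoidal natural automorphisms of the identity. Two preliminary facts drive the whole argument. First, $A$ is abelian: this is the standard Eckmann--Hilton observation that vertical and horizontal (Godement) composition of natural endomorphisms of $\Id_\cC$ agree and are each unital. Second, and crucially, the $G$-action on $A$ induced by $T$ is \emph{trivial}, since monoidally isomorphic autoequivalences act identically on $A$ and by hypothesis each $T(g)\sim_\otimes\Id_\cC$; consequently $H^2(G,A)$ is ordinary group cohomology with trivial coefficients. Unwinding \eqref{def: b} gives the closed formula $b(g,h)=\chi_{gh}\circ\phi(g,h)\circ(\chi_g\circ\chi_h)^{-1}$, which exhibits $b(g,h)$ as the exact obstruction to the family $(\chi_g)$ satisfying the trivialization identity, i.e.\ $\chi_g\circ\chi_h=\chi_{gh}\circ\phi(g,h)$ holds if and only if $b(g,h)$ is the identity.

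For part (1), I would obtain the cocycle identity $b(h,k)\,b(g,hk)=b(gh,k)\,b(g,h)$ directly from the associativity coherence of the action, that is, the commuting diagram relating $\phi(g,h),\phi(gh,k),\phi(g,hk),\phi(h,k)$ in the definition of a $G$-action. Substituting the closed formula for $b$, whiskering that coherence by the appropriate $\chi$'s so as to descend it to $A$, and using that each $T(\cdot)$ acts trivially on $A$ (so the whiskered factors reduce to the corresponding $b$-values without a twist), all $\chi$-terms telescope and the $\phi$-terms collapse by associativity; hence $b\in Z^2(G,A)$. For independence of the class, a second choice $(\chi'_g)$ differs from $(\chi_g)$ by the $1$-cochain $c(g):=\chi'_g\circ\chi_g^{-1}\in A$, and a short substitution into the closed formula shows the associated cocycle satisfies $b'=b\cdot\delta c$, so $[b']=[b]$ in $H^2(G,A)$.

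For parts (2) and (3), I would parametrize all candidate families by $A$-valued $1$-cochains: every family of monoidal isomorphisms $\eta_g\colon T(g)\to\Id_\cC$ has the form $\eta_g=c(g)\circ\chi_g$ for a unique $c\colon G\to A$, namely $c(g)=\eta_g\circ\chi_g^{-1}$. Expanding the horizontal composite $\eta_g\circ\eta_h$ by the interchange law, using $c(g)\circ c(h)=c(g)c(h)$ in $A$ and the defining relation \eqref{def: b} for the right-hand side, the trivialization identity $\eta_g\circ\eta_h=\eta_{gh}\circ\phi(g,h)$ reduces to the single equation $c(g)c(h)=c(gh)\,b(g,h)$ in $A$, that is $b=\delta c$. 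Thus a trivialization exists precisely when $b$ is a coboundary, which is exactly $[b]=0$, proving (2). When this holds the solution set is $\{c:\delta c=b\}$, and the quotient of two solutions is a $1$-cocycle; since the coefficients are trivial, $Z^1(G,A)=\Hom(G,A)$. The rule $\eta\mapsto(\rho(g)\circ\eta_g)_g$ for $\rho\in\Hom(G,A)$ then defines a free and transitive action on the (nonempty) set of trivializations, giving the torsor structure of (3).

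I expect the main obstacle to be the two-categorical bookkeeping rather than any deep difficulty: one must verify carefully that the induced $G$-action on $A$ is genuinely trivial, and keep vertical and horizontal composition straight so that the interchange law cleanly separates the $c$-factors from the $\chi$-factors and so that the whiskered coherence descends to the untwisted cocycle identity. Once these points are settled, every displayed relation becomes an equation in the abelian group $A$, where the cocycle, coboundary, and torsor computations are entirely routine.
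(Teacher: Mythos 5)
Your proposal is correct and follows essentially the same route as the paper: both reduce all identities to equations in the abelian group $\Aut{\Id_\cC}$, derive the cocycle condition from the associativity coherence of the action, show independence of the class via the coboundary $u_g=\chi'_g\circ\chi_g^{-1}$, and parametrize trivializations by $1$-cochains to get parts (2) and (3), with $Z^1(G,\Aut{\Id_\cC})=\Hom(G,\Aut{\Id_\cC})$ for trivial coefficients. The only difference is packaging: the paper first transports the action along the $\chi_g$ to one with $T'(g)=\Id_\cC$ (Lemma \ref{lemma: transport structure}) so the coherence diagram literally becomes the cocycle identity, whereas you carry out the whiskering/interchange bookkeeping directly and make explicit the (implicitly used) fact that the induced $G$-action on $\Aut{\Id_\cC}$ is trivial.
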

\begin{proof}
(1) It follows from Lemma \ref{lemma: transport structure} that the $G$-action $T$ is equivalent to the $G$-action where $T'(g)=\Id_\cC$ as monoidal functor and $b(g,h):T'(g)\circ T'(h)=\Id_\cC\to T'(gh)=\Id_{\cC}$. Hence, the monoidal condition of $T'$ translate directly to commutativity of the diagram

\[
\xymatrix{
&\Id_{\cC}=T'(g)\circ T'(h)\circ T'(k) \ar[dl]_{b(g,h)\circ \Id_\cC} \ar[dr]^{\Id_\cC\circ b(h,k)}& \\ 
\Id_{\cC}=T'(gh)\circ T'(k) \ar[dr]_{b(gh,k)} &&\Id_{\cC}=T'(g)\circ T'(hk) \ar[dl]^{b(g,hk)}\\
&T'(ghk)=\Id_{\cC}&
}
\]that in equations translate to the 2-cocycle condition
\begin{align*}
b(gh,k)b(g,h)=b(g,hk)b(h,k) && \text{ for all } g,h,k\in G.    
\end{align*}
Now, if $\chi'_g: T(g)\to \Id_{\cC}$ is another family of monoidal natural isomorphisms, then $u_g:=\chi'_g\circ \chi_g^{-1}\in \Aut{\cC}$ and then using the naturality of $u_g$ we have that
\begin{align}\label{eq: b'}
b'(g,h)=u_{gh}\circ u_g^{-1}\circ u_h^{-1} \circ b(g,h) && \forall g,h\in G,
\end{align}hence the cohomology of $b$ does not depend on $\chi$.

(2) By definition, if $\eta$  is a trivialization of $T$ the associated 2-cocycle is trivial. Conversely, if a family of monoidal isomorphisms $\{\chi_g:T(g)\to \Id_\cC\}_{g\in G}$ defines a $b\in Z^2(G,\Aut{\Id_\cC})$ such that there is $u:G\to \Aut{\Id_\cC}$ such that $b(g,h)=u_{gh}^{-1} u_gu_h$ for all $g,h  \in G$ then the family $\{\eta_g:=u_g\chi_g|g\in G\}$  defines a trivialization of $G$.

(3) If $\{\eta_g: g\in G\}$ and $\{\eta_g': g\in G\}$ are trivialization then $\eta_g=\eta'u_g$ for a map $u:G\to \Aut{\Id_{\cC}}$. Hence by equation \eqref{eq: b'} we have that $u$ is a group homomorphism.
\end{proof}

\section{Graded braided monoidal categories as  crossed braided fusion categories with a trivialization}

A $G$-graded monoidal category is  a  monoidal category $\cC$ endowed with a decomposition $\C=\coprod_{g\in G} \cC_g$  (coproduct of categories) such that
 \begin{itemize}
 \item $\mathbf{1} \in \cC_e$,
 \item $\cC_g\otimes \cC_h \subset \cC_{gh}$ for all $g,h\in G$.
 \end{itemize}
If $\ku$ is a commutative ring and $\cC$ is a $\ku$-linear abelian category, the coproduct $\cC=\coprod_{g\in G} \C_g$ is taken in the category of $\ku$-linear abelian categories. 

\subsubsection{Braided $G$-crossed monoidal categories}

Let $T:\underline{G}\to \underline{\operatorname{Aut}_{\otimes}(\cC)}$ be an action of a group $G$ on $\cC$. Given $X, Y\in Ob(\cC)$ and $f:X\to Y$, we will denote by $g_*(X)$ and $g_*(f)$ the image of $X$ and $f$ under the functor $T(g)$.
\begin{definition}
Let $G$ be a group. A  $G$-crossed monoidal category is a monoidal category $\cC$ equipped
with the following structures:

\begin{enumerate}
    \item[(i)] an action of $G$ on $\cC$,
    \item[(ii)] a $G$-grading $\cC=\coprod_{g\in G}\cC_g$,
    \item[(iii)]  isomorphisms \begin{align*}
        c_{X,Y}:X\otimes Y\to g(Y)\otimes X, && g\in G, X\in \cC_g, Y\in \cC,
    \end{align*}natural in $X$ and $Y$. The isomorphisms $c_{X,Y}$ are called the \emph{$G$-braiding}.
\end{enumerate}This structures should satisfy the following conditions:
\begin{enumerate}
    \item[(a)] $g_*(\cC_h) \subseteq \cC_{ghg^{-1}}$, for all $g, h \in G$,
    \item[(b)] The diagrams
   \begin{equation*}\label{axiom 1 trenza}
\begin{tikzcd}
g_*(X\ot Z) \ar{dd}{\can} \ar{rrrr}{g_*(c_{X,Z})} &&&& g_*(h_*(Z)\ot X)\ar{dd}{\can}\\\\   
g_*(X)\ot g_*(Z) \ar{rrrr}{c_{g_*(X),g_*(Z)}} &&&& (ghg^{-1})_*g_*(Z)\ot g_*(X) 
\end{tikzcd}
\end{equation*}
commute  for all $X\in \cC_h, Z\in \cC, g,h\in G$.
\item[(c)]  The  diagrams

\begin{equation}\label{trenzas G 1}
\begin{tikzcd}
X \ot Y\ot Z \ar{rr}{c_{X,Y\ot Z}} \ar{d}{c_{X,Y}\ot\id_{Z}} &&   g_*(Y\ot Z)\ot X \ar{d}{\can} \\
g_*(Y)\ot X\ot Z \ar{rr}{\id_{g_*(Y)}\ot c_{X,Z}}&&  g_*(Y)\ot g_*(Z) \ot X
\end{tikzcd}
\end{equation}
commute for all $X\in \cC_g, Y,Z\in \cC$ and the diagrams 

\begin{equation}\label{trenzas G 2}
\begin{tikzcd}
X \ot Y\ot Z \ar{rr}{c_{X\ot Y, Z}} \ar{d}{\id_X\ot c_{Y,Z}} &&   (gh)_*(Z)\ot X\ot Y  \ar{d}{\can} \\
X\ot h_*(Z)\ot Y\ar{rr}{c_{X,h_*(Z)}\ot \id_Y}&&  g_*h_*(Z)\ot X \ot Y
\end{tikzcd}
\end{equation}
commute for all $X\in \cC_g, Y\in \cC_h, Z\in \cC, g,h\in G$.
\end{enumerate}
The isomorphisms $\can$ are the natural isomorphisms constructed using the natural isomorphisms of the action of $G$ on $\cC$. 
\end{definition}



A braided $G$-crossed monoidal category is strict if the $G$-action is strict. By \cite[Theorem 5.6]{GALINDO-Coherence}, every  $G$-crossed category is equivalent to a strict braided $G$-crossed monoidal category. Hence, in some proofs we will consider strict $G$-crossed braided categories without loss of generality.

\subsection{$A$-crossed braided tensor categories with trivialization}

Let $A$ be an abelian group. We define the 2-category of  $A$-crossed braided tensor categories with a \emph{trivializations} as follows: 

(1) objects are pairs $(\cC, \eta)$, where $\cC$ is a   braided $A$-crossed monoidal category  and $\eta$ is a trivializations of the $A$-action. 

(2) A 1-morphism from $(\cC, \eta )$ to $(\cC', \eta' )$ is a $A$-graded monoidal functor $(F,F_2):\cC \to \cC'$ such that the diagram 

\begin{equation}\label{diagram:condition-braided_trivialization}
\xymatrix@C-1em{
F(X_g\ot Y) \ar[dd]_{F_2(X_g,Y)} \ar[rrrr]^{F(c_{X_g,Y})} &&&& F(g_*(Y)\otimes X_g)\ar[dd]^{F_2(g_*(Y),X_g)}\\\\   
F(X_g)\ot F(Y) \ar[dd]_{c_{F(X_g),F(Y)}} &&&& F(g_*(Y))\ot F(X_g)) \ar[dd]^{F(\eta(g)_Y)\ot \id_{F(X_g)}}\\\\
g_*(F(Y))\ot F(X_g) \ar[rrrr]^{\eta'(g)_{F(Y)}\ot \id_{F(X_g)}  }  &&&& F(Y)\otimes F(X_g) }    
\end{equation}
commutes for all $X_g\in \cC_g, Y\in \cC$ and $g\in G$. 

(3) A 2-arrow is just a monoidal natural isomorphism.

\begin{theorem}
If $(\cC,c,\eta)$ is a  $A$-crossed braided monoidal category with a trivialization, the natural isomorphisms $c_{X_g,Y}^{(\eta)}:X_g\otimes Y\to Y\otimes X_g$ given by

\begin{equation}
\begin{tikzcd}
X_g\otimes Y \ar{rr}{c_{X_g,Y}^{(\eta)} } \ar{rd}{c_{X_g,Y}} && Y\otimes X_g\\
&  g_*(Y)\otimes X_g \ar{ru}{\eta(g)_Y\otimes \id_{X_g}} &
\end{tikzcd}
\end{equation}
define a braiding on $\cC$.

This assignment defines a biequivalence between the 2-category of  $A$-crossed braided fusion categories with a trivialization and the 2-category of  $A$-graded braided tensor categories.
\end{theorem}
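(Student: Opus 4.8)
The plan is to treat the two assertions separately: first verify that $c^{(\eta)}$ is an honest braiding, then upgrade the assignment $(\cC,c,\eta)\mapsto(\cC,c^{(\eta)})$ to a biequivalence. By \cite[Theorem 5.6]{GALINDO-Coherence} I may assume the $A$-crossed category is strict, so that every $\can$ is an instance of the structure maps $T_2(g,h)$ of the action and of the monoidal structure of $T(g)$. Naturality of $c^{(\eta)}_{X_g,Y}=(\eta(g)_Y\otimes\id_{X_g})\circ c_{X_g,Y}$ in both variables is immediate, being a composite of the natural isomorphisms $c$ and $\eta(g)\otimes\id$; for a non-homogeneous first argument I extend $c^{(\eta)}$ through the grading decomposition $\cC=\coprod_g\cC_g$, and the hexagons for general objects then follow from the homogeneous cases by additivity. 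The substance of part (1) is thus the two hexagon identities for homogeneous inputs.

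For the first hexagon, with $X_g\in\cC_g$, I expand the composite $(\id_Y\otimes c^{(\eta)}_{X_g,Z})\circ(c^{(\eta)}_{X_g,Y}\otimes\id_Z)$, slide the two copies of $\eta(g)$ past the crossed braidings (they act on disjoint tensor factors), collect them into $\eta(g)_Y\otimes\eta(g)_Z$, apply \eqref{trenzas G 1}, and finally invoke the monoidality of $\eta(g)$ in the form $\eta(g)_{Y\otimes Z}\circ\can=\eta(g)_Y\otimes\eta(g)_Z$ to recognize the outcome as $c^{(\eta)}_{X_g,Y\otimes Z}$. The second hexagon is the delicate one, and I expect it to be the main obstacle. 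Here $X_g\in\cC_g$, $Y_h\in\cC_h$, so $X_g\otimes Y_h\in\cC_{gh}$ (using that $A$ is abelian). Expanding $(c^{(\eta)}_{X_g,Z}\otimes\id_{Y_h})\circ(\id_{X_g}\otimes c^{(\eta)}_{Y_h,Z})$, the difficulty is that the inner factor yields $c_{X_g,Z}\circ(\id\otimes\eta(h)_Z)$, whereas \eqref{trenzas G 2} is phrased with $c_{X_g,h_*(Z)}$. I resolve this by naturality of the crossed braiding in its second variable, namely $(g_*(\eta(h)_Z)\otimes\id)\circ c_{X_g,h_*(Z)}=c_{X_g,Z}\circ(\id\otimes\eta(h)_Z)$, after which \eqref{trenzas G 2} applies. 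The three accumulated factors $\eta(g)_Z$, $g_*(\eta(h)_Z)$ and $T_2(g,h)_Z$ then combine, by the trivialization coherence $\eta(gh)_Z=\eta(g)_Z\circ g_*(\eta(h)_Z)\circ T_2(g,h)_Z$ (the defining condition of $\eta$ read as a horizontal composite), into $\eta(gh)_Z$, producing $c^{(\eta)}_{X_g\otimes Y_h,Z}$. This interplay of \eqref{trenzas G 2} with the trivialization condition is the technical heart of the argument.

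For the biequivalence I describe the $2$-functor $\Phi$ that is the identity on underlying $A$-graded tensor categories, on functors, and on natural transformations, and merely replaces the crossed braiding by $c^{(\eta)}$. The content is that $\Phi$ lands in, and is locally an isomorphism onto, the $2$-category of $A$-graded braided tensor categories. On $1$-morphisms this is precisely the assertion that commutativity of \eqref{diagram:condition-braided_trivialization} is equivalent to $F$ being braided for the pair $(c^{(\eta)},c'^{(\eta')})$: expanding $c^{(\eta)}$ and $c'^{(\eta')}$ in the braided-functor axiom and using naturality of the monoidal structure $F_2$ together with $F(\cC_g)\subseteq\cC'_g$ converts the two sides of that axiom exactly into the two paths around \eqref{diagram:condition-braided_trivialization}. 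On $2$-morphisms both $2$-categories take monoidal natural isomorphisms as $2$-cells and $\Phi$ acts identically, so $\Phi$ induces an isomorphism on each $\Hom$-category.

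It remains to check essential surjectivity. Given an $A$-graded braided tensor category $(\cD,c)$, I equip it with the trivial $A$-action $T(g)=\Id_\cD$, trivial $T_2$, trivialization $\eta(g)=\id$, and crossed braiding $c_{X_g,Y}:=c_{X_g,Y}$ (well-defined since $g_*(Y)=Y$). I verify that axioms (a)--(c) then degenerate exactly to the grading condition and the two ordinary hexagons for $c$, and that the trivialization condition holds trivially, so this defines an object $\Psi(\cD,c)$ of the crossed-trivialization $2$-category with $\Phi\Psi(\cD,c)=(\cD,c)$. Hence $\Phi$ is essentially surjective and a local equivalence, and therefore a biequivalence with quasi-inverse $\Psi$; concretely, $\Psi\Phi$ returns the original category with its $A$-action straightened to the trivial one, which is equivalent to the original via Lemma \ref{lemma: transport structure} (compare Theorem \ref{obstruccion general}(2)).
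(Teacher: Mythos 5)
Your proposal is correct and follows exactly the route the paper has in mind: the paper's proof is the one-line remark that this is ``a straightforward computation'' (deducible from \cite[Appendix 5, Proposition 2.3]{MR3195545}), and you have simply carried out that computation — the two hexagons via \eqref{trenzas G 1}, \eqref{trenzas G 2}, monoidality of $\eta(g)$ and the trivialization coherence, plus the identity-on-underlying-data $2$-functor with the trivial-action quasi-inverse. Your identification of the second hexagon (naturality of $c_{X_g,-}$ applied to $\eta(h)_Z$, then absorbing $\eta(g)_Z\circ g_*(\eta(h)_Z)\circ T_2(g,h)_Z$ into $\eta(gh)_Z$) as the technical heart is exactly right, and the rest checks out.
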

\begin{proof}
The proof is a straightforward computation and even can be deduced directly from  \cite[Appendix 5, Proposition 2.3]{MR3195545}.
\end{proof}

\begin{example}\label{Ex:pointed}

Let $G$ be a group (non necessarily finite) and $\ku$ a field or a commutative ring. A (normalized)  3-cocycle $\omega \in Z^3(G, \ku^\times)$ is a map $\omega:G\times G\times G\to \ku^{\times}$ such that 
\begin{align*}
\omega(ab,c,d)\omega(a,b,cd)&=
\omega(a,b,c)\omega(a,bc,d)\omega(b,c,d), & \omega(a,1,b)=1,
\end{align*}
for all   $a,b,c,d\in G.$ 

Let us recall the description of the tensor category $\Vec_G^\omega$. The objects of $\Vec_G^\omega$ are $G$-graded $\ku$-modules $V=\bigoplus_{g\in G} V_{g}$. Morphisms are $\ku$-linear $G$-homogeneous maps. 
The tensor product of $V=\oplus_{g\in G}V_g$ and $W=\oplus_{g\in G}W_g$  is
$V\otimes_{\ku} W$ as $\ku$-module, with $G$-grading
\[(V\otimes W)_g=\bigoplus_{h\in G}V_h\otimes_{\ku} W_{h^{-1}g}.\]
For objects $V, W, Z \in \Vec_G^\omega$ the associativity constraint is defined by \begin{align*}
a_{V,W,Z}: (V\otimes W)\otimes Z&\to V\otimes (W\otimes Z)\\ 
(v_g\otimes w_h)\otimes z_k &\mapsto \omega(g,h,k) v_g\ot( w_h\otimes z_k)
\end{align*}
for all $g,h,k \in G, v_g\in V_g, w_h\in W_h, z_k\in Z_k$.  The unit objects is $\ku_e$, the $\ku$-module  $\ku$ graded only by the identity element $e\in G$.

Let $A$ be an abelian group and $\omega \in Z^3(A,\ku^\times)$ and define the maps
\begin{align*}
\mu(\sigma,\tau| \rho) &:= \frac{\omega(\tau, \sigma,\rho)}
{\omega(\tau, \rho, \sigma)\omega(\sigma, \tau, \rho)}\\
\gamma(\sigma|\tau,\rho) &:= \frac{\omega(\sigma, \tau, \rho) \omega(\rho,\sigma,\tau)}
{\omega(\sigma, \rho, \tau)},
\end{align*}
for all $\sigma$, $\tau$, $\rho\in A$. 

The tensor category $\Vec_A^\omega$ has a canonical braided $A$-crossed structure with $A$-action defined as follows: for each $g\in A$, the associated tensor functor is  $$g_*:=\Id,\  
\psi(g)_{a,b}= \gamma(g|a,b)\operatorname{id}_{\ku_{ab}}$$ and for each pair $g,h \in A$, the tensor natural isomorphism is
\begin{align*}
T_2(g,h)_{\ku_a}=\mu(g,h;a)\operatorname{id}_{\ku_a}, &&  a\in A.    
\end{align*}

The functor  $(g_*, \psi(g))$ is equivalent to the identity if and only if the cohomology class of $\gamma(g|-,-)\in Z^2(A,\ku^\times)$ is trivial.

For $\eta \in C^2(A,\ku^\times)$ define $$\delta_v(\eta)(a_1|a_2,a_3)=\eta(a_1,a_2)\eta(a_1,a_3)/\eta(a_1,a_2a_3)$$ and $$\delta_h(\eta)(a_1,a_2|a_3)=\eta(a_1,a_3)\eta(a_2,a_3)/\eta(a_1a_2|a_3).$$

Assume that $0=[\gamma(a|-,-)]\in H^2(A,\ku^\times)$
for all $a\in A$. Thus, there exists $\eta:A\times A\to \ku^\times$ such that $\delta_v(\eta)=\gamma$. The obstruction of Theorem \ref{obstruccion general} is given by
$$b(\eta)=\delta_h(\eta)\mu \in Z^2(A,\Hom(A,\ku^\times)),$$ since in this case $\Aut{\Id_{\Vec_A^\omega}}=\Hom(A,\ku^\times)$. As a conclusion, we obtain that a 3-cocycle $\omega\in Z^3(A,\ku^\times)$ admits a braiding if and only if the cohomology class of $b(\eta)$ vanishes, see \cite{MR3605649} for more details in this direction.

\end{example}

\section{$\Z/2$-bradings and braidings over Tambara-Yamagami categories}

Before compute the $\Z/2$-bradings and braidings over Tambara-Yamagami we will recall the concept of relative brading.
\subsection{Strongly graded central extensions}\label{section strongly graded central extensions}

For future computations, will recall the following notion.
\begin{definition}
Let $\cC$ be a monoidal category and $\cB\subset \cC$ be a monoidal subcategory. A  \emph{relative braiding}  consists of a natural family of isomorphisms \[\{c_{A,X}:A\ot X\to X\ot A\}_{A\in \cB, X\in \cC}\] such that the  diagrams 

\begin{equation}\label{H1}
    \begin{tikzcd}
&               A\ot (X\ot Y) \arrow[r,"c_{A,X\ot Y}"]&  (X\ot Y)\ot A \arrow[rd,"a_{X,Y,A}"] &\\
(A\ot X) \ot Y \arrow[ru, "a_{A,X,Y}"] \arrow[rd, " c_{A,X} \ot \id_Y"']&  &  & X \ot (Y\ot A)\\
&  (X\ot A)\ot Y \arrow[r,"a_{X,A,Y}"']& X\ot (A\ot Y) \arrow[ru," \id_X \ot c_{A,Y}"']
\end{tikzcd}
\end{equation}
and
\begin{equation}\label{H2}
\begin{tikzcd}
&               (A\ot B)\ot X \arrow[r,"c_{A\ot B,X}"]& X\ot (A\ot B) \arrow[rd,"a_{X,A,B}^{-1}"] &\\
A\ot (B \ot X) \arrow[ru, "a_{A,B,X}^{-1}"] \arrow[rd, "\id_A\ot c_{B,X}"']&  &  & (X\ot A)\ot B\\
& A\ot (X\ot B) \arrow[r,"a_{A,X,B}^{-1}"']& (A\ot X)\ot B \arrow[ru,"c_{A,X}\ot \id_Y"']
\end{tikzcd}
\end{equation}
commute for all $A, B \in \cB, X, Y\in \cC$.

\end{definition}

\begin{remark}
\begin{itemize}
    \item[(a)] A relative braiding is a central inclusion $\cB\to \mathcal{Z}(\cC)$ such that the composition with the forgetful functor $\mathcal{Z}(\cC)$ is the identity of $\cB$.
    
    \item[(b)] If $\cC$ is a faithful braided $G$-crossed category then the $G$-braiding $$c_{A_e,X_g}: A_e\otimes X_g\to X_g\otimes A_g, \quad \quad A_e\in \cC_e, X_g\in \cC,$$ defines a relative braiding.
\end{itemize}
\end{remark}

Let $\cB=\cC_e\subset \cC=\coprod_{g\in G}\cC_g$ a central $G$-extension. For each $g\in G$ we have monoidal functors

\begin{align}
\alpha^g:\cB\to \End_\cB(\cC_g), && \alpha^G_X(M_g)=X\ot M_g\\
\beta^g:\cB\to \End_\cB(\cC_g), && \beta^g_X(M_g)= M_g\ot X
\end{align}
with natural isomorphisms

\begin{align*}
     a_{Y,X,M_g}\circ c_{Y,X}^{-1}\ot\id_{M_g}\circ a_{X,Y,M_g}^{-1}:\alpha^g_X(Y\ot M_g) \to Y\ot \alpha^g_X(M_g)\\
    a_{Y,M_g,X}: \beta^g_X(Y\ot M_g) \to Y\ot \beta^g_X(M_g)
\end{align*}
for all $X,Y\in \cB, M_g\in \cC_g$.

\begin{definition}
A  $G$-graded extension $\cB\subset \cC$  will be
called a \emph{strongly graded} central $G$-extension if $\alpha^g$ and $\beta^g$ are equivalence of categories for each $g\in G$.
\end{definition}

If $\cB\subset \cC$ is a strongly graded central extension, then for every $g\in G$  there is a unique (up to equivalence) braided autoequivalence  $T(g):\cB\to \cC$ determined by the existence of a tensor equivalence 

\begin{equation}\label{eq: definition of action}
\alpha^g \circ T(g) \cong \beta^g.    
\end{equation}

Hence the monoidal functor $T(g)$ is naturally equivalent to the identity, if and only if $\alpha^g\cong \beta^g$  for all $g\in G$.
\subsection{Tambara-Yamagami fusion categories}
In this section, we collect some definitions and well-known facts about Tambara-Yamagami categories that we will need.

In \cite{TY} D.~Tambara and S.~Yamagami classified all
$\mathbb{Z}/2\mathbb{Z}$-graded
fusion categories  in which all but one of the
simple objects are invertible. 

\begin{definition}
Let $A$ be a finite abelian group. The \emph{Tambara-Yamagami fusion rules} are defined  over $A\cup \{m\}$ with product \begin{align*}
a\ot b =ab,\quad a\ot m = m,&& m \ot a=m,&& m \ot m
=\bigoplus_{a\in A}\, a,    
\end{align*}
for all $a, b\in A,$ and unit element $e\in A$.

Let $A$ be a finite abelian group, $\chi:A\times A\to \ku^\times$ a symmetric non-degenerate bicharacter and $\tau \in \ku^\times$ a square root  of
$|A|^{-1}$. The Tambara-Yamagami category $\TY(A,\chi, \tau)$ is the skeletal fusion category with Tambara-Yamagami fusion rules, strict unit object and non-identities associativity constraints 
\begin{align}
    \alpha_{a,m,b}=\chi(a,b)\id_m:m&\to m\\
    \alpha_{m,a,m}=\bigoplus_{b\in A}\chi(a,b)\id_b:\bigoplus_{b\in A}b &\to \bigoplus_{b\in A}b,\\
    \alpha_{m,m,m}=\big ( \tau \chi(a,b)^{-1}\id_m\big )_{a,b}:\bigoplus_{a \in A}m &\to \bigoplus_{b \in A}m
\end{align}
\end{definition}

\begin{remark}
The category $\TY(A,\chi, \tau)$ is rigid with 
    \begin{align*}
        a^*=a^{-1},&& coev_a= ev_a=\id_e, && a\in A,  
    \end{align*}
    and $m^*=m$,  $coev_m:e\to m\ot m$ the canonical injection and $ev_m=\tau^{-1}p: m\otimes m \to e$, where $p$ is the projection on $e$.

\end{remark}

Let $\operatorname{Aut}(A,\chi)$ be the group of automorphism of $A$ that respect $\chi$. Hence any $f\in \Aut{A,\chi}$ defines a \emph{strict} tensor auto-equivalence of $\TY(A,\chi, \tau)$ by 
\begin{align}\label{tensor autoequivalences TY}
F_f(a)=f(a), && F_f(m)=m,&&  a\in A.
\end{align}
Moreover, it was proved in \cite[Proposition 1]{Tambara-fiber} and \cite[Proposition 2.10]{MR2480712} that every tensor autoequivalence has the form $F_f$ for a unique $f\in \operatorname{Aut}(A,\chi)$.

\subsection{Relative braidings  for a Tambara-Yamagami fusion category}

Recall that if $A$ is an abelian group then a \emph{quadratic form} on $A$ with values in $\ku^\times$ is a function $q:A\to \ku^\times$ such that the symmetric function $w(a,b)=\frac{q(ab)}{q(a)q(b)}$ is a bicharacter and $q(a^{-1})=q(a)$ for all $a\in A$.

\begin{proposition}\label{Prop: TY braided crossed structures}
The relative braidings on $\Vec_A\subset \TY(A,\chi, \tau)$ are in correspondence with quadratic forms  $q:A\to \ku^\times$ such that
\begin{align}\label{eq: quadratic form TY}
\chi(a,b)=\frac{q(a)q(b)}{q(ab)}, && \forall a,b \in A.    
\end{align}
Moreover, 
\begin{itemize}
    \item[(a)] The relative braiding associated to a quadratic form $q:A\to \ku^\times$ satisfying \eqref{eq: quadratic form TY} is given by 
    \begin{align}
        c_{a,b}=\chi(a,b)\id_{ab}, && c_{a,m}=q(a)\id_m, && a,b\in A.
    \end{align}
    \item[(b)] The braided autoequivalence defined by \eqref{eq: definition of action} is the strict tensor automorphism
    \begin{align}
        T_1(a)=a^{-1}, && a\in A.
    \end{align}
    \item[(c)] Two relative braiding corresponding to quadratic forms $q$ and $q'$ are equivalent if and only if and only if there is a $f\in \operatorname{Aut}(A)$ such that $q'(f(a))=q(a)$ for all $a\in A$.
\end{itemize}

\end{proposition}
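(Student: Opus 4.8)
The plan is to exploit that both $\cB=\Vec_A$ and $\cC=\TY(A,\chi,\tau)$ are skeletal and semisimple, so a relative braiding is recorded by scalars. Since $a\ot b=ab=b\ot a$ and $a\ot m=m=m\ot a$, Schur's lemma forces
$$c_{a,b}=\beta(a,b)\,\id_{ab},\qquad c_{a,m}=q(a)\,\id_m$$
for functions $\beta\colon A\times A\to\ku^\times$ and $q\colon A\to\ku^\times$. First I would observe that naturality of $\{c_{a,-}\}$ applied to the summand inclusions $b\hookrightarrow m\ot m=\bigoplus_c c$ forces the braiding on the decomposable object $m\ot m$ to be diagonal with entries $\chi(a,c)$; hence the whole relative braiding is determined by $\beta$ and $q$, and the content of the statement is to decide which pairs $(\beta,q)$ satisfy the hexagons \eqref{H1}--\eqref{H2}.

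Next I would feed the simple objects into \eqref{H1} and \eqref{H2}, using that the only nontrivial associators of $\TY(A,\chi,\tau)$ are $\alpha_{a,m,b}=\chi(a,b)\id$, $\alpha_{m,a,m}=\bigoplus_b\chi(a,b)\id$ and $\alpha_{m,m,m}$. Running \eqref{H1} with $X=b,\,Y=m$ collapses, after cancelling the factor $\alpha_{b,m,a}=\chi(b,a)$, to $\beta(a,b)=\chi(b,a)=\chi(a,b)$, so $\beta=\chi$; the cases with $X,Y\in A$ and the case $X=m,\,Y=b$ are then automatically consistent because $\chi$ is a symmetric bicharacter. Running \eqref{H2} with $X=m$ produces $q(ab)=q(a)q(b)\,\chi(a,b)^{-1}$, i.e.\ the splitting relation $\chi(a,b)=q(a)q(b)/q(ab)$, while the case $X=c\in A$ only reproduces bimultiplicativity of $\chi$. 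This already yields the formulas in (a).

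The step I expect to be the crux is \eqref{H1} with $X=Y=m$, the only axiom genuinely probing the decomposition $m\ot m=\bigoplus_c c$ together with $\alpha_{m,a,m}$. Tracking a summand $c_0$ through the two paths --- the top path picking up $\chi(a,a^{-1}c_0)=\chi(a,a)^{-1}\chi(a,c_0)$ from $c_{a,m\ot m}$ together with the trivial associators $\alpha_{a,m,m},\alpha_{m,m,a}$, and the bottom path picking up $q(a)\cdot\chi(a,c_0)\cdot q(a)$ from $c_{a,m}\ot\id$, $\alpha_{m,a,m}$, and $\id\ot c_{a,m}$ --- and cancelling the common factor $\chi(a,c_0)$ yields the extra relation $q(a)^2=\chi(a,a)^{-1}$. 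Combined with the splitting relation this is exactly what upgrades $q$ to a genuine quadratic form: setting $b=a^{-1}$ in the splitting relation and using $q(e)=1$ gives $q(a)q(a^{-1})=\chi(a,a^{-1})=\chi(a,a)^{-1}=q(a)^2$, whence $q(a^{-1})=q(a)$, while $q(ab)/(q(a)q(b))=\chi(a,b)^{-1}$ is visibly a bicharacter. Conversely, a quadratic form $q$ satisfying \eqref{eq: quadratic form TY} automatically obeys $q(a)^2=\chi(a,a)^{-1}$, so substituting the formulas of (a) back into \eqref{H1}--\eqref{H2} verifies every hexagon and establishes the bijection. The main obstacle is thus precisely this $X=Y=m$ computation: without it one obtains only the splitting relation, which (as one sees already for $A=\Z/3$) is strictly weaker than the quadratic-form condition.

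For (b) I would compute the $\Vec_A$-module structures of the endofunctors $\alpha^1_x,\beta^1_x$ of $\cC_1=\langle m\rangle$: using $\alpha_{y,m,x}=\chi(y,x)\id$ the functor $\beta^1_x$ carries the module scalar $\chi(y,x)$, whereas the twisted structure of $\alpha^1_x$ carries $c_{y,x}^{-1}=\chi(y,x)^{-1}$. Solving $\alpha^1_{T_1(x)}\cong\beta^1_x$ then demands $\chi(y,T_1(x))=\chi(y,x)^{-1}=\chi(y,x^{-1})$ for all $y$, so nondegeneracy of $\chi$ forces $T_1(x)=x^{-1}$. Finally, for (c) I would invoke that every tensor autoequivalence of $\TY(A,\chi,\tau)$ is of the form $F_f$ for a unique $f\in\operatorname{Aut}(A,\chi)$ (\cite{Tambara-fiber,MR2480712}), transport a relative braiding along $F_f$, and read off from $F_f(m)=m$, $F_f(a)=f(a)$ that its quadratic form becomes $q\circ f^{-1}$; hence two relative braidings $q,q'$ are equivalent iff $q'\circ f=q$ for some $f$, and the identity $\chi(f(a),f(b))=q'(f(a))q'(f(b))/q'(f(ab))=q(a)q(b)/q(ab)=\chi(a,b)$ forced by such an $f$ shows $f$ may be taken in $\operatorname{Aut}(A)$ as stated.
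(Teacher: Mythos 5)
Your proposal is correct and follows essentially the same route as the paper: evaluate the hexagons \eqref{H1}--\eqref{H2} on simple objects to force $c_{a,b}=\chi(a,b)\id$, the splitting relation $q(ab)=q(a)q(b)\chi(a,b)^{-1}$, and the key extra relation $q(a)^2=\chi(a,a)^{-1}$ from the instance involving $m\ot m$; then identify the module endofunctors $\alpha^1_x,\beta^1_x$ with characters of $A$ and use nondegeneracy of $\chi$ for (b), and invoke the Tambara/Nikshych classification of autoequivalences $F_f$ for (c). The only differences are cosmetic (you use the hexagon instance $(a;b,m)$ where the paper uses $(a;m,b)$, and you spell out the converse verification and the equivalence with the quadratic-form condition slightly more explicitly).
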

\begin{proof}
(a)\  \  For objects $X,Y,Z\in \TY(A,\chi, \tau)$ we will denote by $H_1(X,Y,Z)$ and $H_2(X,Y,Z)$ the hexagons \eqref{H1} and \eqref{H2} respectively.

Braiding on $\Vec_A$ are in corresponds with bicharacter $c:A\times A\to \ku^\times$ via $c_{a,b}=c(a,b)\id_{ab}$ for all $a, b \in A$. Let us denote by $q:A\to \ku^\times$ the function defined by  $c_{a,m}=q(a)\id_{m}$. 

The commutativity of the hexagon $H_1(a,m,b)$ is equal  to
\begin{align*}
  q(a)\chi(a,b)=c(a,b)q(a),
\end{align*}
that is $\chi(a,b)=c(a,b)$. The commutativity of the hexagon $H_2(a,b,m)$ is the equation 
\begin{equation}\label{eq:multi q}
q(ab)=q(a)\chi(a,b)^{-1}q(b).    
\end{equation}

The commutativity of the hexagon $H_1(a,m,m)$ is exactly the equation $\chi(a,a^{-1}b)=q(a)\chi(a,b)q(a)$, or equivalently $\chi(a,a)^{-1}=q(a)^2$. In presence of equation \eqref{eq:multi q}, the equation  $\chi(a,a)^{-1}=q(a)^2$ is equivalent to $q(a)=q(a^{-1})$.

Finally, the commutativity of diagram $H_1(a,b,m)$ is $\chi(b,a)q(a)=q(a)\chi(a,b)$ that follows from the symmetry of $\chi$.

(b) \ \ We will follow the notation and results from Section \ref{section strongly graded central extensions}. The simple objects of $\End_{\Vec_A}(\Vec_{m},\Vec_{m})$ are in bijective correspondence with elements in $\widehat{A}$ the group of characters of $A$. In fact, giving $\alpha \in A$, the tensor functor $F=\Id_{\Vec_m}$ with the natural isomorphism
\begin{align*}
F_2(a,m)=\alpha(a)\id_{a\ot m}, && a\in A,
\end{align*}
define a simple object in $\End_{\Vec_A}(\Vec_{m},\Vec_{m})$. Under this correspondence we have that
\begin{align*}
    \alpha_a=\chi(a,-)^{-1}=\chi(a^{-1},-), && \beta_a=\chi(-,a), && \forall a\in A.
\end{align*}Hence, if  $T^1(a)=a^{-1}$ for all $a\in A$ we have that $\alpha^1\circ T^1=\beta^1$.
 
(c)\ \ Let $q$ and $q'$ quadratic forms defining relative braiding for $\TY(A,\chi, \tau)$ and $f \in \operatorname{Aut}(A,\chi)$  such that $F_f:\TY(A,\chi, \tau)\to \TY(A,\chi, \tau)$ defined by \eqref{tensor autoequivalences TY} is an equivalence of central extensions, that is
 \[
 \begin{tikzcd}
 F_f(a\ot m) \arrow[r, "F_f(c_{a,m})"] \arrow[d, equal]& F_f(m\ot a)\arrow[d, equal]\\
 f(a)\ot m \arrow[r, "c_{f(a),m}"] & m\ot f(a)
 \end{tikzcd}
 \]Hence, $q(a)=q'(f(a))$ for all $a\in A$. Conversely, if there is $f\in \operatorname{Aut}(A)$ such that $q=q'\circ f$, then $\chi=\chi\circ f\times f$ and the tensor auto-equivalence $F_f$ defines an equivalence of central extensions.
\end{proof}

The following corollary explains the conditions founded in \cite{siehler2000braided} for the existence of braidings in a Tambara-Yamagami fusion category.

\begin{corollary}\label{coro: braiding implies elementary abelian 2-g}
If $\TY(A,\chi, \tau)$ admits a braiding then $A$ is an elementary abelian 2-group, that is, $a^2=e$ for all $a\in A$.
\end{corollary}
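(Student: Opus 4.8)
The plan is to read the corollary off Proposition \ref{Prop: TY braided crossed structures}(b) through the biequivalence of Section 3 between $\Z/2$-graded braided tensor categories and braided $\Z/2$-crossed categories equipped with a trivialization. First I would record that $\TY(A,\chi,\tau)$ is faithfully $\Z/2$-graded, with trivial component $\cC_0=\Vec_A$ and nontrivial component $\cC_1$ spanned by $m$. A braiding on $\TY(A,\chi,\tau)$ is precisely a $\Z/2$-graded braided tensor structure, so under that biequivalence it corresponds to a braided $\Z/2$-crossed structure on $\TY(A,\chi,\tau)$ together with a trivialization $\eta$ of the underlying $\Z/2$-action.

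By definition a trivialization supplies a tensor natural isomorphism $\eta(1)\colon T(1)\to \Id_{\cC}$; in particular the autoequivalence $T(1)$ attached to the nontrivial element $1\in\Z/2$ satisfies $T(1)\cong_{\otimes}\Id_{\cC}$, and hence $T(1)|_{\Vec_A}\cong_{\otimes}\Id_{\Vec_A}$. The next step is to identify this restriction explicitly. A braiding on $\TY(A,\chi,\tau)$ restricts to a relative braiding on $\Vec_A\subset\TY(A,\chi,\tau)$ in the sense of Section \ref{section strongly graded central extensions}, and by Proposition \ref{Prop: TY braided crossed structures}(b) the braided autoequivalence determined by \eqref{eq: definition of action} is the inversion functor $a\mapsto a^{-1}$. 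Thus $T(1)|_{\Vec_A}$ is inversion.

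It then remains to compare inversion with the identity on $\Vec_A$. The simple objects of $\Vec_A$ are indexed by the elements of $A$, and inversion sends the simple object labelled by $a$ to the one labelled by $a^{-1}$; since distinct elements of $A$ label non-isomorphic simple objects, a natural isomorphism from inversion to $\Id_{\Vec_A}$ can exist only if $a^{-1}=a$ for every $a\in A$. Combined with the previous paragraph, the existence of a braiding forces $a^{2}=e$ for all $a\in A$, that is, $A$ is an elementary abelian $2$-group.

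The step I expect to require the most care is the identification in the second paragraph: one must check that the crossed-action autoequivalence $T(1)$ recovered from the trivialization genuinely agrees with the autoequivalence computed from the relative braiding in Proposition \ref{Prop: TY braided crossed structures}(b). This is precisely the compatibility, recorded in Section \ref{section strongly graded central extensions}, between the crossed braiding of a faithful braided $G$-crossed category and the relative braiding it induces, together with the strongly graded central extension construction of \eqref{eq: definition of action}; once it is invoked the remaining steps are formal. Should one prefer to bypass the biequivalence, an alternative is to argue directly with the hexagon axioms of a full braiding: the hexagons for the triples containing $m$ relate the scalars $c_{a,m}=q(a)\id_m$ to the bicharacter and, together with the relation $q(a)^2=\chi(a,a)^{-1}$ of Proposition \ref{Prop: TY braided crossed structures}(a), force $\chi(a,a)=1$ for every $a\in A$; nondegeneracy of $\chi$ then yields $a^2=e$.
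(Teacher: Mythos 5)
Your main argument (the first three paragraphs) is correct and is essentially the paper's own proof: both hinge on Proposition \ref{Prop: TY braided crossed structures}(b), which identifies the canonical autoequivalence attached to the nontrivial degree as inversion $a\mapsto a^{-1}$, together with the observation that a genuine braiding forces that autoequivalence to be isomorphic to the identity, whence $a=a^{-1}$ for all $a\in A$. The paper reads the triviality of $T^1$ directly off the relative-braiding construction of Section \ref{section strongly graded central extensions} (a full braiding gives $\alpha^1\cong\beta^1$, hence $T^1\cong\Id_{\Vec_A}$ by \eqref{eq: definition of action}), rather than routing through the Section~3 biequivalence and a trivialization, but this is only a difference of packaging.

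One caveat: the alternative hexagon argument you sketch at the end is not correct as stated. A braiding does \emph{not} force $\chi(a,a)=1$: for the Ising categories one has $\chi(\psi,\psi)=-1$ and $q(\psi)^{2}=\chi(\psi,\psi)^{-1}=-1$, yet these categories admit braidings. The relation $q(a)^{2}=\chi(a,a)^{-1}$ from Proposition \ref{Prop: TY braided crossed structures}(a) only encodes $q(a)=q(a^{-1})$ and gives no constraint of the form $\chi(a,a)=1$. The correct direct computation uses a hexagon genuinely involving $c_{m,a}$ (which does not exist for a mere relative braiding), e.g.\ the one for $c_{m\otimes m,a}$: its left side is $\bigoplus_{b}\chi(b,a)$, while the right side is $w(a)^{2}\bigoplus_{b}\chi(a,b)^{-1}$ with $c_{m,a}=w(a)\id_m$, so $\chi(a,b)^{2}=w(a)^{2}$ for all $b$; setting $b=e$ gives $w(a)^{2}=1$, hence $\chi(a^{2},b)=1$ for all $b$, and nondegeneracy of $\chi$ yields $a^{2}=e$.
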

\begin{proof}
If  $\TY(A,\chi, \tau)$ admits a braiding, then by Proposition \ref{Prop: TY braided crossed structures} admits a relative braiding with associated $T^1=\Id_{\Vec_A}$, that is such that $a=a^{-1}$ for all $a\in A$. Hence $A$ is an elementary abelian 2-group. 
\end{proof}

\subsection{$\Z/2$-braidings for Tambarara-Yamagami fusion categories}

\begin{lemma}\label{Lemma: Z_2 actions pon TY}
Let $\TY(A,\chi, \tau)$ be a Tambara-Yamagami fusion category. Then,
\begin{enumerate}
    \item There is a unique non-trivial tensor natural isomorphism of  $\Id_{\TY(A,\chi, \tau)}$, namely $\gamma_a=\id_a$  for all $a\in A$ and $\gamma_m=-\id_m$.
\item Up to equivalence there are exactly two $\Z/2\Z$-actions with $T_1(a)=a^{-1}$. The first action is the strict action and the second one has monoidal natural isomorphism 
\begin{align*}
\gamma:T_1\circ T_1=\Id\to \Id, && \gamma_m=-\id_m, && \gamma_a=\id_a
\end{align*}
for all $a\in A.$ 
\end{enumerate}
\end{lemma}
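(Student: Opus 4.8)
The plan is to handle the two parts in turn, using throughout that the simple objects of $\TY(A,\chi,\tau)$ are the invertibles $a\in A$ together with $m$, and that a natural endomorphism of a $\ku$-linear functor between these fusion categories acts by a scalar on each (absolutely) simple object.

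For part (1), I would start with a monoidal natural automorphism $\gamma$ of $\Id_{\TY(A,\chi,\tau)}$ and write $\gamma_X=\gamma(X)\,\id_X$ for simple $X$, with $\gamma(e)=1$. The monoidal condition with respect to the (strict) monoidal structure on $\Id$ is $\gamma_{X\ot Y}=\gamma_X\ot\gamma_Y$, which I would impose on pairs of simples, reading off the three families of fusion rules. From $a\ot b=ab$ one gets that $\gamma|_A$ is a character of $A$; from $a\ot m=m$ one gets $\gamma(a)\gamma(m)=\gamma(m)$, hence $\gamma(a)=1$ for all $a$; and from $m\ot m=\bigoplus_{a\in A}a$ together with naturality with respect to the inclusions of the summands one gets $\gamma(m)^2=\gamma(a)=1$. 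Thus $\gamma(a)=1$ and $\gamma(m)=\pm1$, so $\Aut{\Id_{\TY(A,\chi,\tau)}}\cong\{\pm1\}$ and its unique nontrivial element is exactly the claimed $\gamma$ (conceptually, this records that the universal grading group of $\TY(A,\chi,\tau)$ is $\Z/2\Z$, so $\Aut{\Id}\cong\widehat{\Z/2\Z}$). Conversely, $(-1)^2=1$ shows these three relations do hold for this $\gamma$, so it is genuinely monoidal.

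For part (2), I would first normalize. After strictifying the action I may assume $T(0)=\Id$ and $T(1)=T_1=F_{\mathrm{inv}}$ is the strict autoequivalence $a\mapsto a^{-1}$, $m\mapsto m$; here I use that every tensor autoequivalence is some $F_f$, that $\mathrm{inv}\in\operatorname{Aut}(A,\chi)$, and that $\mathrm{inv}^2=\id$ so $T_1\circ T_1=\Id$ strictly. Normalizing the unit constraints $T_2(0,0)=T_2(0,1)=T_2(1,0)=\id$, the only remaining datum is the monoidal natural isomorphism $T_2(1,1)\colon \Id=T(1\cdot1)\to T(1)\circ T(1)=\Id$, i.e.\ an element of $\Aut{\Id_{\TY(A,\chi,\tau)}}=\{\id,\gamma\}$ by part (1). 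Writing the associativity coherence of the monoidal functor $T$ at $(g,h,k)=(1,1,1)$ and using the normalized unit data, it collapses to the single identity $(T_2(1,1))_{T_1(X)}=T_1\big((T_2(1,1))_X\big)$, which I would verify holds for both $\id$ and $\gamma$ (for $\gamma$ it reduces to $\gamma_m=T_1(\gamma_m)=-\id_m$ on $m$ and to $\id$ on each $a$). This yields the two candidate actions: $T_2(1,1)=\id$ is the strict action and $T_2(1,1)=\gamma$ is the second action in the statement.

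It remains to see that these two are inequivalent and that there are no others, which I would phrase cohomologically in the spirit of Theorem \ref{obstruccion general}. With both actions strictified to the same functor $T(1)=T_1$, an equivalence of actions is determined by $\theta_1\in\Aut{T_1}$, and its effect on the datum $T_2(1,1)$ is multiplication by the horizontal composite $\theta_1*\theta_1$, whose component at $X$ is $(\theta_1)_{T_1(X)}\circ T_1\big((\theta_1)_X\big)$. Computing $\Aut{T_1}$ exactly as in part (1) gives $\theta_1(a)=1$ and $\theta_1(m)=\pm1$; the key point is that then $\theta_1*\theta_1$ has $a$-component $1$ and $m$-component $\theta_1(m)^2=1$, so $\theta_1*\theta_1=\id$ for every $\theta_1$. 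Hence there is no nontrivial coboundary, the two classes $T_2(1,1)\in\{\id,\gamma\}$ are distinct, and since the strict action realizes the trivial class the relevant $H^3$-obstruction vanishes; thus the equivalence classes are a torsor over $H^2(\Z/2\Z,\Z/2\Z)\cong\Z/2\Z$, giving exactly two. The one genuine computation---and the step I expect to be the main obstacle---is simultaneously checking the coherence identity for $T_2(1,1)=\gamma$ and the vanishing $\theta_1*\theta_1=\id$; both rest on the single fact $\gamma(m)^2=1$ and on $T_1$ fixing $m$ and preserving that scalar.
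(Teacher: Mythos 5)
Your argument is correct, but it takes a different route from the paper: the paper disposes of both items by citation, deducing (1) from \cite[Proposition 3.9]{GELAKI20081053} (which identifies $\Aut{\Id_{\cC}}$ with the character group of the universal grading group, here $\Z/2\Z$ since the adjoint subcategory of $\TY(A,\chi,\tau)$ is $\Vec_A$) and (2) from \cite[Theorem 5.5 (iii)]{Ga1} (which makes the liftings of a fixed homomorphism $G\to\pi_0(\underline{\Aut{\cC}})$ to an action a torsor over $H^2(G,\Aut{\Id_\cC})$). What you do instead is re-prove exactly the special cases of those two general results that are needed: the fusion-rule computation $\gamma(a)=1$, $\gamma(m)^2=1$ for (1), and for (2) the normalization $T(1)=F_{\mathrm{inv}}$ (legitimate since every tensor autoequivalence is isomorphic to some $F_f$ with $f\in\operatorname{Aut}(A,\chi)$, and $\mathrm{inv}^2=\id$), the reduction of the data to $T_2(1,1)\in\Aut{\Id_{\TY(A,\chi,\tau)}}$, the verification that both choices satisfy the coherence $(T_2(1,1))_{T_1(X)}=T_1((T_2(1,1))_X)$, and the vanishing of the coboundary $\theta_1*\theta_1$ because $\theta_1(m)^2=1$. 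All of these steps check out (your $\theta_0=\id$ normalization and the triviality of the induced $\Z/2\Z$-module structure on $\Aut{\Id}$ are fine), so your write-up is a complete, self-contained proof; the only cosmetic quibble is the phrase ``after strictifying the action,'' which should be ``after replacing the action by an equivalent one with $T(1)=F_{\mathrm{inv}}$ and normalized unit constraints,'' since literal strictification in the sense of \cite{GALINDO-Coherence} would also trivialize the datum $T_2(1,1)$ you are classifying. Your approach buys transparency and independence from the two external references, at the cost of length; the paper's citation-based proof is shorter but hides the explicit computation of $\Aut{\Id}$ and of the relevant $H^2$ that your version makes visible and that is reused implicitly in the proof of Theorem \ref{thm: Z/2 braiding TY}.
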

\begin{proof}
The first item follows immediately from \cite[Proposition 3.9]{GELAKI20081053} and the second one follows from \cite[Theorem 5.5 (iii)]{Ga1}.
\end{proof}

\begin{theorem}\label{thm: Z/2 braiding TY}
The $\Z/2\Z$-braidings of $\TY(A,\chi, \tau)$ are in correspondence with  pairs $(q,\alpha)$, where $q:A\to \ku^\times$ is a quadratic form such that
\begin{align*}
\chi(a,b)=\frac{q(a)q(b)}{q(ab)}, && \forall a,b \in A.    
\end{align*}
and $\alpha\in\ku^{\times}$ such that $\alpha^2=\tau \Big (\sum_{a\in A}q(a) \Big)$. 

Moreover, given such a pair $(q,\alpha)$, the braided $\Z/2$-crossed structure is given as follows:

\begin{itemize}
    \item The $\Z/2$-action on $\TY(A,\chi, \tau)$ is strict and determined by \begin{align*}
        T(a)=a^{-1}, && T(m)=m, && a\in A.
    \end{align*}
    \item The $\Z/2$-braiding is given by 
    \begin{align}\label{eq: Z/2 brading}
        c_{a,b}=\chi(a,b)\id_{ab}, && c_{a,m}=c_{m,a}=q(a)\id_{m}, && c_{m,m}=\alpha\bigoplus_{a\in A} q(a)^{-1}\id_a
    \end{align}for all $a\in A$.
\end{itemize}
\end{theorem}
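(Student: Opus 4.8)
The plan is to reduce the entire structure to data living on the trivial component $\cC_0=\Vec_A$ together with the single extra scalar $\alpha$, and then to read off the two constraints from the hexagon axioms of a braided $\Z/2$-crossed category once those axioms have been decorated with the (non-strict) associators of $\TY(A,\chi,\tau)$.

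First I would fix the canonical $\Z/2$-grading $\cC_0=\Vec_A$, $\cC_1=\Vec_m$ and observe that the restriction of the $G$-braiding to objects of $\cC_0$ in the first slot is exactly a relative braiding on $\Vec_A\subset\TY(A,\chi,\tau)$ (Remark (b) following the definition of relative braiding). By Proposition \ref{Prop: TY braided crossed structures} this relative braiding is the same datum as a quadratic form $q$ with $\chi(a,b)=q(a)q(b)/q(ab)$; it forces $c_{a,b}=\chi(a,b)\id_{ab}$ and $c_{a,m}=q(a)\id_m$, and it forces the induced action on $\cC_0$ to be $a\mapsto a^{-1}$. By Lemma \ref{Lemma: Z_2 actions pon TY} the full $\Z/2$-action extending $T_1(a)=a^{-1}$ is one of exactly two; I would carry the remaining argument out with the strict action and check at the end that the non-strict one produces no further braidings, the sign $\gamma_m=-\id_m$ being absorbed into the replacement $\alpha\mapsto-\alpha$.

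Next I would pin down the two genuinely new components. Since $m\otimes a=m$ is simple and $m\otimes m=\bigoplus_{a\in A}a$ is multiplicity-free, the crossed braidings $c_{m,a}\colon m\otimes a\to a^{-1}\otimes m$ and $c_{m,m}\colon m\otimes m\to m\otimes m$ are necessarily a scalar $\kappa_a$ and a diagonal family $\bigoplus_{a}\lambda_a\id_a$. Feeding $X=m$, $Y,Z\in\cC_0$ into hexagon \eqref{trenzas G 1} and inserting the associator $\alpha_{a,m,b}=\chi(a,b)\id_m$ collapses that hexagon to the functional equation $\kappa_{ab}=\chi(a,b)^{-1}\kappa_a\kappa_b$, the same relation \eqref{eq:multi q} satisfied by $q$; a further hexagon relating the two mixed braidings removes the residual character ambiguity and gives $\kappa_a=q(a)$, i.e. $c_{m,a}=c_{a,m}=q(a)\id_m$. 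Feeding $X=m$, $Y=a\in\cC_0$, $Z=m$ into hexagon \eqref{trenzas G 2}, where now the associator $\alpha_{m,a,m}=\bigoplus_b\chi(a,b)\id_b$ enters, relates the $\lambda_b$ for different $b$ and forces $q(b)\lambda_b$ to be independent of $b$; writing this common value as $\alpha$ yields $\lambda_b=\alpha\,q(b)^{-1}$, which is the claimed formula in \eqref{eq: Z/2 brading}.

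The quantitative constraint on $\alpha$ is the heart of the matter, and I expect it to be the main obstacle. I would extract it from the hexagon with all three inputs equal to $m$: here $m\otimes m\otimes m$ is $|A|$ copies of $m$, the associator $\alpha_{m,m,m}=(\tau\chi(a,b)^{-1}\id_m)_{a,b}$ is a full $|A|\times|A|$ matrix, and the hexagon becomes a matrix identity whose entries are built from $\chi$, $q$ and $\lambda=\alpha q^{-1}$. Collapsing this identity to a single scalar requires the Gauss-sum and orthogonality relations for the non-degenerate bicharacter $\chi$ and its quadratic refinement $q$, and it is exactly this collapse that produces $\alpha^2=\tau\sum_{a\in A}q(a)$. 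The delicate part is the associator bookkeeping: because $\TY(A,\chi,\tau)$ is not strict, each strict diagram \eqref{trenzas G 1}, \eqref{trenzas G 2} must first be redrawn with $\alpha_{a,m,b}$, $\alpha_{m,a,m}$ and $\alpha_{m,m,m}$ inserted in the correct places, and one must check that on the $\cC_0$-part the decorated diagrams are precisely the hexagons \eqref{H1}, \eqref{H2} used in Proposition \ref{Prop: TY braided crossed structures}. For the converse I would take an arbitrary pair $(q,\alpha)$ satisfying the two stated conditions, define the $\Z/2$-crossed braiding by the formulas \eqref{eq: Z/2 brading}, and verify all axioms for every combination of objects of type $a$ and $m$; each check reduces to the quadratic-form identity, the defining relation $\chi(a,b)=q(a)q(b)/q(ab)$, or the identity $\alpha^2=\tau\sum_{a\in A}q(a)$, which together with the uniqueness above gives the asserted bijection.
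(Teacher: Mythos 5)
Your overall strategy coincides with the paper's: restrict the $G$-braiding to the trivial component to recover the relative braiding of Proposition \ref{Prop: TY braided crossed structures} (hence $q$, the formulas for $c_{a,b}$ and $c_{a,m}$, and the action $a\mapsto a^{-1}$), invoke Lemma \ref{Lemma: Z_2 actions pon TY} to reduce to two candidate $\Z/2$-actions, determine $c_{m,a}$ and $c_{m,m}$ from the mixed hexagons, and extract $\alpha^2=\tau\sum_{a\in A}q(a)$ from the all-$m$ hexagon. Those steps, including the functional equations $\kappa_{ab}=\chi(a,b)^{-1}\kappa_a\kappa_b$ and ``$q(b)\lambda_b$ constant,'' and the Gauss-sum evaluation (which in fact collapses to the single scalar identity at $a=c=e$ once $s(a)=s(e)q(a)^{-1}$ and $\chi(a,b)=q(a)q(b)/q(ab)$ are in hand), match the paper's computation.

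The one step that would fail as you describe it is the disposal of the non-strict action. You propose to absorb the sign $\gamma_m=-\id_m$ into the replacement $\alpha\mapsto-\alpha$; that cannot work, and the correct argument is different. The natural isomorphism $\gamma\colon T_1\circ T_1\to \Id$ enters the crossed-braiding axioms only through the map $\can\colon (gh)_*(Z)\to g_*h_*(Z)$ in hexagon \eqref{trenzas G 2}; it does not appear in hexagon \eqref{trenzas G 1}, whose $\can$ is the (strict, trivial) monoidal structure of $T_1$. Hence, for the non-strict action the two all-$m$ hexagons force $s(a)s(c)\chi(a,c)=+\tau\sum_{b}q(b)\chi(b,ac)$ and $s(a)s(c)\chi(a,c)=-\tau\sum_{b}q(b)\chi(b,ac)$ simultaneously; since the Gauss sum $\sum_b q(b)$ is nonzero for a nondegenerate quadratic form, these are contradictory, and the non-strict action admits \emph{no} $\Z/2$-braiding whatsoever. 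This matters for the statement being proved: if your absorption argument were valid, the non-strict action would carry exactly as many braidings as the strict one (with $\alpha$ relabelled), these would be genuinely distinct braided $\Z/2$-crossed structures since the underlying actions differ, and the asserted bijection with pairs $(q,\alpha)$ would be off by a factor of two. You need the sign to land asymmetrically in the two hexagons, as above, rather than symmetrically in $\alpha$.
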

\begin{proof}
Let $\cC=\TY(A,\chi, \tau)$ with $\cC_0=\Vec_A$ and $\cC_1= \Vec_m$ and $q:A\to \ku^\times$ a quadratic form defining a relative braiding. 

It follows from Lemma \ref{Lemma: Z_2 actions pon TY} that there are only two possible $\Z/2\Z$-actions with $T_1(a)=a^{-1}$ for all $a\in A$. The only difference with the two actions is given by the automorphism $\gamma: T_1\circ T_1 \to \Id$, in the strict $\Z/2\Z$-action we have that $\gamma=\id$ and in the second one $\gamma_m=-\id_m$.

The diagrams \eqref{trenzas G 1} and \eqref{trenzas G 2} for  strict $g_*$'s are  written as

\begin{equation}\label{HH1}
    \begin{tikzcd}
&               X\ot (Y\ot Z) \arrow[r,"c_{X, Y\ot Z}"]&  (g(Y)\ot g(Z))\ot X \arrow[rd,"a_{Y,Z,X}"] & \\
(X\ot Y) \ot Z \arrow[ru, "a_{X,Y,Z}"] \arrow[rd, " c_{X,Y} \ot \id_Z"']&  &  & Y \ot (Z\ot X)\\
&  (g(Y)\ot X)\ot Z \arrow[r,"a_{Y,X,Z}"']& g(Y)\ot (X\ot Z) \arrow[ru,"\ot \id_\ot Y c_{X,Z}"']
\end{tikzcd}
\end{equation}where $X \in \cC_g$

\begin{equation}\label{HH2}
    \begin{tikzcd}
&               (X\ot Y)\ot Z \arrow[r,"c_{X\ot Y,Z}"]& gh(Z)\ot (X\ot Y) \arrow[r,"{\gamma^{g,h}_Z\ot 1}"] & g(h(Z))\ot (X\ot Y)\arrow[d,"a^{-1}"]\\
X\ot (Y \ot Z) \arrow[ru, "a{-1}"] \arrow[rd, "\id_X\ot c_{Y,Z}"']&  &  & (Z\ot X)\ot Y\\
& X\ot (Z\ot Y) \arrow[r,"a^{-1}"']& (X\ot Z)\ot Y \arrow[ru,"c_{X,Z}\ot 1"']
\end{tikzcd}
\end{equation}
where $X\in\cC_g, Y\in\cC_h$. We will denote as $HH_1(X,Y,Z)$ and $HH_2(X,Y,Z)$ the diagrams \eqref{HH1} and \eqref{HH2} respectively.

Let us denote by $w, s :A\to \ku^\times$ the functions defined by 
\begin{align*}
    c_{m,a}=w(a)\id_m:m\ot a &\to -a\ot m,\\
    c_{m,m}=\oplus_{\in A}s(a)\id_a:m \ot m =\bigoplus_{a\in A}a &\to m\ot m=\bigoplus_{a\in A}a .
\end{align*}

The commutativity of diagram $HH_1(m,a,m)$ is the equation 
\begin{equation}\label{eq HH_2(m,a,m)}
\chi(a,b)s(b)=w(a)s(ab), \quad \forall b\in A,
\end{equation}
and commutativity of diagram $HH_2(m,a,m)$ is $\chi(a,b)^{-1}s(b)=q(a)s(ba^{-1})$ for all $b\in B$. Using that $q(a)=q(a^{-1})$ we have that $HH_2(m,a,m)$ commutes if and only if \[\chi(a,b)s(b)=q(a)s(ab), \quad \forall b \in A. \]
Hence the commutativity of diagrams $HH_1(m,a,m)$ and $HH_2(m,a,m)$ for all $a\in A$ is equivalent to $w(a)=q(a)$ for all $a\in A$. Moreover, we have from \eqref{eq HH_2(m,a,m)} that 
\begin{equation}\label{eq: s in terms of q}
    s(a)=s(e)q(a)^{-1}, \quad \forall a\in A. 
\end{equation} Hence, $c_{m,m}=s(e)\bigoplus_{a\in A}q(a)^{-1}\id_a$.

The commutativity of diagram $HH_1(m,m,m)$ is equal to 
\[
s(a)s(c)\chi(a,c)^{-1}=\tau\sum_{b\in A}q(b)\chi(b,a^{-1}c), \quad \forall a, c \in A.
\]Equivalently, using that $s(a)=s(a^{-1})$ the commutativity of $HH_1(m,m,m)$ is equal to the equation
\[
s(a)s(c)\chi(a,c)=\tau\sum_{b\in A}q(b)\chi(b,ac), \quad \forall a, c \in A.
\]
Taking $a=c=e$ we have that \[s(e)^2=\tau  \big (\sum_{b\in A}q(b)\big ). \]
The commutativity of the diagram $HH_2(m,m,m)$ for the strict $\Z/2\Z$-action is equal to 
\[
s(a)s(c)\chi(a,c)=\tau\sum_{b\in A}q(b)\chi(b,ac), \quad \forall a, c \in A.
\]and for the non-strict action is equal to
\[
s(a)s(c)\chi(a,c)=-\tau\sum_{b\in A}q(b)\chi(b,ac), \quad \forall a, c \in A.
\]Hence, only the strict action admits a $\Z/2\Z$-braiding.
\end{proof}
The following result is a reinterpretation of the main result of \cite{siehler2000braided}.
\begin{corollary}\cite[Theorem 1.2]{siehler2000braided}
\begin{itemize}
    \item[(a)] A Tambara-Yamagami fusion  category  $\TY(A,\chi, \tau)$ admits a braiding if and only if $A$ is an elementary abelian 2-group.
    \item[(b)] If $A$ is an elementary abelian 2-group, there is a correspondence between braiding and pairs $(q,\alpha)$ where $q$ is a quadratic form such that
\begin{align*}
\chi(a,b)=\frac{q(a)q(b)}{q(ab)}, && \forall a,b \in A.    
\end{align*} and $\alpha \in \ku^\times$ such that $\alpha^2=\tau(\sum_{a\in A}q(a))$. 
The braiding associated to a pair $(q,\alpha)$ is given by the formulas in \eqref{eq: Z/2 brading}.
\item[(c)] Two braidings associated to $(q,\alpha)$ and $(q',\alpha')$ are equivalent if and  only if there is $f\in \operatorname{Aut}(A)$ such that $q'(f(a))=q(a)$ for all $a\in A$ and $\alpha=\alpha'$.
\end{itemize}
\end{corollary}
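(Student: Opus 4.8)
The plan is to read off all three statements from the biequivalence of Section~3 applied to the $\Z/2$-grading $\cC_0=\Vec_A$, $\cC_1=\Vec_m$ of $\cC=\TY(A,\chi,\tau)$. Under that biequivalence a braiding on $\cC$ (and every braiding automatically respects an abelian grading, since $c_{X,Y}$ sends a degree-$gh$ object to a degree-$hg=gh$ object) is precisely the datum of a braided $\Z/2$-crossed structure on $\cC$ together with a trivialization of its $\Z/2$-action. Theorem~\ref{thm: Z/2 braiding TY} already classifies the braided $\Z/2$-crossed structures by pairs $(q,\alpha)$ and shows that the underlying action is forced to be the strict one with $T(1)(a)=a^{-1}$ and $T(1)(m)=m$. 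So the whole corollary reduces to deciding when this action is trivializable and to tracking the residual data through the biequivalence.

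For part~(a), I would observe that a trivialization includes a monoidal natural isomorphism $\eta_1\colon T(1)\to \Id_\cC$, so its existence forces $T(1)\cong \Id_\cC$. As $\cC$ is skeletal with invertible simple objects $a\in A$, the space $\Hom_\cC(T(1)(a),a)=\Hom_\cC(a^{-1},a)$ is nonzero only when $a^{-1}=a$; hence $T(1)\cong\Id_\cC$ holds if and only if $a=a^{-1}$ for all $a\in A$, that is, $A$ is an elementary abelian $2$-group. (Equivalently, this is the vanishing of the obstruction of Theorem~\ref{obstruccion general}, which is automatic here since the action then becomes the identity.) Conversely, on such an $A$ the action is literally $\Id_\cC$, so $\eta=\id$ trivializes it, and Theorem~\ref{thm: Z/2 braiding TY} supplies a compatible pair $(q,\alpha)$ provided $\ku$ contains the requisite roots of unity; the biequivalence then returns a genuine braiding, giving the ``if and only if''.

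For part~(b), once $A$ is elementary abelian the $\Z/2$-action is trivial, so in the braiding formula $c^{(\eta)}_{X_g,Y}=(\eta(g)_Y\otimes\id)\circ c_{X_g,Y}$ of the Section~3 biequivalence the functors $g_*$ and the isomorphisms $\eta(g)$ are all identities. Therefore the braiding produced on $\cC$ coincides on the nose with the $\Z/2$-braiding $c$ of \eqref{eq: Z/2 brading}, and the parametrization by pairs $(q,\alpha)$ from Theorem~\ref{thm: Z/2 braiding TY} transports verbatim into the claimed classification of braidings.

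For part~(c), I would use that equivalences of braided categories correspond, under the biequivalence, to equivalences of crossed structures with trivialization, i.e.\ to $A$-graded braided functors. The tensor autoequivalences of $\TY(A,\chi,\tau)$ are exactly the $F_f$ with $f\in\operatorname{Aut}(A,\chi)$, which by Proposition~\ref{Prop: TY braided crossed structures}(c) identify $q$ with $q'\circ f$; since each $F_f$ fixes $m$, it acts as the identity on the $c_{m,m}$-component carrying $\alpha$, forcing $\alpha=\alpha'$. It remains to check that varying the trivialization creates no further identifications: by Lemma~\ref{Lemma: Z_2 actions pon TY}(1) the only nontrivial tensor natural automorphism of $\Id_\cC$ is $\gamma$ with $\gamma_m=-\id_m$, and conjugating \eqref{eq: Z/2 brading} by the monoidal $\gamma$ leaves every component fixed because $m\otimes m$ is $A$-homogeneous of trivial degree. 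I expect this last verification, confirming that the leftover gauge freedom in the trivialization acts trivially on $(q,\alpha)$, to be the only genuinely fiddly point; everything else is a direct translation through the biequivalence.
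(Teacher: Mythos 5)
Your argument is correct and follows essentially the same route as the paper: reduce to Theorem \ref{thm: Z/2 braiding TY} by showing that a braiding forces $a=a^{-1}$ for all $a\in A$ (so the $\Z/2$-action is trivial and a $\Z/2$-braiding is literally a braiding), and classify equivalences via the autoequivalences $F_f$, reading off $q=q'\circ f$ and $\alpha=\alpha'$ from the components $c_{a,m}$ and $c_{m,m}$. The one imprecision is your final ``fiddly point'' in (c): conjugating the braiding by the monoidal automorphism $\gamma$ is trivial simply by naturality of $c$ together with $\gamma_{X\ot Y}=\gamma_X\ot\gamma_Y$ (not because $m\ot m$ is homogeneous of trivial degree --- note that replacing the trivialization $\id$ by $\gamma$ genuinely flips the sign of $\alpha$ in the induced braiding, merely permuting the parametrization), but this check is redundant once equivalences of braidings are identified with the braided functors $F_f$, so the proof stands.
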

\begin{proof}
If follows from Corollary \eqref{coro: braiding implies elementary abelian 2-g} that $A$ must be elementary abelian if a braidings for  $\TY(A,\chi, \tau)$ exist. In this case, the $\Z/2$-action is trivial. Hence a $\Z/2$-braiding is exactly a braiding. Now, the first two items of the corollary follow from Theorem \ref{thm: Z/2 braiding TY}.

Let $(q,\alpha)$ and $(q',\alpha')$ pairs  defining two braidings for $\TY(A,\chi, \tau)$ and $f \in \operatorname{Aut}(A,\chi)$  such that $F_f:\TY(A,\chi, \tau)\to \TY(A,\chi, \tau)$ defined by \eqref{tensor autoequivalences TY} is an equivalence of braided categories. Then we should have $F_f(c_{a,m})=c_{f(a),m}$ for all $a\in A$ and $F_f(c_{m,m})=c_{m,m}$, that is, $q(a)=q'(f(a))$ for all $a\in A$ and $\alpha=\alpha'$ respectively.
\end{proof}

\begin{remark}
In \cite[Section 4B]{GNN},  the authors studied braided $\Z/2$-crossed structures over $\mathcal{Z}_{\Z/2}(\TY(A,\chi, \tau))$ (the equivariant Drinfeld center). Using an equivariant central inclusion of $\TY(A,\chi, \tau)$ in $\mathcal{Z}_{\Z/2}(\TY(A,\chi, \tau))$, it should be possible to describe braided $\Z/2$-crossed structures of Tambara-Yqmagami categories. However, following this approach, our formulas do not agree with the formulas in \cite{GNN}, particularly the $\Z/2$-action consider in \cite{GNN} does not agree with the action of Theorem \ref{thm: Z/2 braiding TY}.
\end{remark}

\subsection{Ribbon $\Z/2$-crossed structures}

Since ribbons of braided $G$-crossed fusion categories play an essential part in the construction of homotopical TFT's, we finish the paper with the computation of ribbon for the $\Z/2$-braiding constructed in the previous section.
 
Let $A$ be an abelian group and $\cB=\coprod_{a\in A}\cB_a$ a strict braided $A$-crossed monoidal category. A \emph{twist} is a natural isomorphism 
\begin{align*}
\theta_{X}:X\to a_*(X), && X\in \cB_a,     
\end{align*}such that 
\begin{itemize}
    \item[(Tw1)] $\theta_{\one}=\id_{\one}$,
    \item[(Tw2)] $b_*\theta_{X}=\theta_{b_*X}$
    \item[(Tw3)]
    \[
    \xymatrix{
    X_a\ot Y_b \ar[d]^{c_{X_a,Y_b}}\ar[rrr]^{\theta_{X_a\ot Y_b}} &&& (ab)_*X_a\ot (ab)_*Y_b \\
     a_*(Y_b)\ot X_a \ar[rrr]^{c_{a_*(Y_b),X_a}} &&& b_*(X_a)\ot a_*(Y_b) \ar[u]_{b_*\theta_{X_a}\ot a_*\theta_{Y_b}}
    }
    \] for all $X_a\in \cB_a, Y_b\in \cB_b$, $a,b \in A$.
\end{itemize}
A twist that satisfies the condition \[\theta_{X_a^*}=(a^{-1})_*(\theta^*_{X_a})\]for all $X_a\in  \cB_a$, $a\in A$ is called a \emph{ribbon}.

\begin{proposition}
Let $\TY(A,\chi, \tau)$ be a Tambara-Yamagami with a $\Z/2$-braiding defined by a pair $(q,\alpha)$. Then $\TY(A,\chi, \tau)$ admits exactly two $\Z/2$-ribbon structures given by
\begin{align}
    \theta_a=q(a)^{-2}, && \theta_m=\beta, && a\in A,
\end{align}where $\beta^{-2}=\tau\Big (\sum_{a\in A}q(a) \Big )$.
\end{proposition}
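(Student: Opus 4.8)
The strategy is to turn the twist axioms (Tw1)--(Tw3) and the ribbon equation into scalar identities on the simple objects. Since $\TY(A,\chi,\tau)$ is semisimple with simples $\{a:a\in A\}\cup\{m\}$ and the $\Z/2$-action is strict, with $0_*=\id$ and $1_*(a)=a^{-1}$, $1_*(m)=m$ for the non-trivial degree, any natural family $\theta$ is determined by scalars $\theta_a\in\ku^\times$ ($a\in A$) and $\theta_m\in\ku^\times$; by additivity of $\theta$ and of the braiding, $\theta_{m\ot m}=\bigoplus_{a\in A}\theta_a\,\id_a$ on $m\ot m=\bigoplus_{a\in A}a$. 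First I would dispatch the cheap constraints: (Tw1) gives $\theta_e=1$, while (Tw2) for the non-trivial degree gives $\theta_a=\theta_{a^{-1}}$, since $1_*$ fixes scalars and carries the simple $a$ to $a^{-1}$.

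Next I would evaluate the twist hexagon (Tw3) on all four parity combinations of $(\deg X,\deg Y)\in\Z/2\times\Z/2$, using the braiding of Theorem \ref{thm: Z/2 braiding TY} and strictness (so every $\can$ is an identity, and no associator enters because (Tw3) involves only binary tensor products). The mixed parities are decisive: for $X=a$ even and $Y=m$ odd, (Tw3) collapses to $\theta_m=\theta_a\,\theta_m\,q(a)^2$, forcing $\theta_a=q(a)^{-2}$; the case $X=m$, $Y=a$ gives the identical relation. The even--even case then reads $\theta_{ab}=\theta_a\,\theta_b\,\chi(a,b)^2$, which is automatically satisfied by $\theta_a=q(a)^{-2}$ on account of \eqref{eq: quadratic form TY}, so it yields nothing new.

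The substantive computation is the odd--odd case $X=Y=m$. Here $m\ot m=\bigoplus_a a$ is even, so the left-hand side of (Tw3) is $\theta_{m\ot m}=\bigoplus_a q(a)^{-2}\id_a$, while the right-hand side is $(1_*\theta_m\ot 1_*\theta_m)\circ c_{m,m}\circ c_{m,m}$. As $c_{m,m}=\alpha\bigoplus_a q(a)^{-1}\id_a$ is diagonal and both braidings are endomorphisms of the single object $m\ot m$, this right-hand side equals $\theta_m^2\,\alpha^2\bigoplus_a q(a)^{-2}\id_a$. Comparing the $a$-components gives $\theta_m^2\alpha^2=1$, that is $\theta_m=\beta$ with $\beta^{-2}=\alpha^2=\tau\big(\sum_{a\in A}q(a)\big)$ by the defining property of $\alpha$ in Theorem \ref{thm: Z/2 braiding TY}. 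Thus there are exactly two twists, one for each square root $\beta$ of $\alpha^{-2}$.

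Finally I would verify the ribbon equation $\theta_{X^*}=(a^{-1})_*(\theta_X^*)$ for both twists. In a semisimple category the dual of a scalar endomorphism $\lambda\,\id_X$ is $\lambda\,\id_{X^*}$ (the factor $\tau^{-1}$ in $ev_m$ cancels and does not affect the scalar), and $1_*$ preserves scalars; hence the equation reduces to $\theta_{a^{-1}}=\theta_a$ for $a\in A$ (true since $q(a)=q(a^{-1})$) and to the tautology $\theta_m=\theta_m$ for the self-dual $m$. Both twists are therefore ribbons, giving exactly two $\Z/2$-ribbon structures. The only step needing genuine care is the odd--odd computation: keeping straight the degree versus the group element in each occurrence of $a_*$, confirming that strictness erases all $\can$'s, and checking that the two copies of $c_{m,m}$ compose with no associator correction; the remainder is bookkeeping.
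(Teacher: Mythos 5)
Your proposal is correct and follows essentially the same route as the paper, which simply evaluates (Tw3) on $(a,m)$ to force $\theta_a=q(a)^{-2}$ and on $(m,m)$ to force $\theta_m^2\alpha^2=1$. Your version is more complete in that it also records that the even--even hexagon is automatic from \eqref{eq: quadratic form TY} and explicitly verifies the ribbon (duality) condition, which the paper's proof leaves implicit.
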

\begin{proof}
The condition (Tw3) for $X=a\in A, Y=m$ is equivalent to $\theta_{-a}=q(a)^{-2}$. Again, condition (Tw3) for $X=m\in A, Y=m$ is equivalent to $\theta_m^2\alpha^2=1$, hence $\theta_m^{-2}=\tau\Big (\sum_{a\in A}q(a) \Big ).$
\end{proof}
\begin{example}[Modular structures on Ising fusion rules]
As a concrete example, we provide a classification of modular Ising categories, giving an alternative proof of some of the results in \cite[Appendix B]{DGNO}.

The Ising fusion rules corresponds to Tambara-Yamagami fusion rules with $A=\Z/2=\{\one, \psi\}$, that is, the simple objects are $\{\one, \psi, m\}$ with fusion rules \begin{align*}
    m^2 =\one+\psi, && \psi m= m\psi \psi =m, && \psi^2= \one. 
\end{align*}
The group $\Z/2$ has only one  non-degenerate symmetric bicharacter determined by $\chi(\psi,\psi)=-1$. Then, there are up to equivalence two fusion categories with Ising fusion rules, namely
\[\TY(\Z/2,\chi,\pm \frac{1}{\sqrt{2}}).\]
There are two quadratic forms given by \[q_{\pm i}(\psi)=\pm i\]  with associated symmetric bicharacter $\chi$, then each $\TY(\Z/2,\chi,\pm \frac{1}{\sqrt{2}})$ admits four different braiding corresponding to the pairs $(q_{ i}, \pm e^{\frac{2\pi i}{8}})$ and $(q_{ -i}, \pm e^{\frac{3\pi i}{8}})$. Finally, each braided fusion category admits two ribbon structures.
\end{example}


\end{document}